\documentclass[12pt,a4paper]{amsart}

\usepackage{amsmath}
\usepackage{amsthm,amssymb,amsfonts}

\usepackage{graphicx}

\newtheorem{thm}{Theorem}[section]
\newtheorem{lemma}[thm]{Lemma}
\newtheorem{prop}[thm]{Proposition}
\newtheorem{cor}[thm]{Corollary}

\theoremstyle{definition}

\newtheorem{remark}[thm]{Remark}

\def\eqn#1$$#2$${\begin{equation}\label#1#2\end{equation}}

\def\th{{\leavevmode\setbox1=\hbox{t}
  \hbox to \wd1{t\kern-0.6ex{\char039}\hss}}}
\def\dh{{\leavevmode\setbox1=\hbox{d}
  \hbox to 1.05\wd1{d\kern-0.4ex{\char039}\hss}}}
\def\=#1{\if #1u{\accent23u}\else
\ifx #1d{\dh}\else \ifx #1t{\th}\else
 {\accent20 #1}\fi\fi\fi}
\def\'#1{\if #1i{\accent19\i}\else {\accent19 #1}\fi}

\def\en{\mathbb N}
\def\er{\mathbb R}

\def \reg {\partial _{\kern1pt\text{reg}}}

\def\wscl#1{\overline{#1}^{w^*}}

\newtoks\by
\newtoks\paper
\newtoks\book
\newtoks\jour
\newtoks\yr
\newtoks\pages
\newtoks\vol
\newtoks\publ
\newtoks\eds
\newtoks\proc
\newtoks\mathrev
\newtoks\web
\def\ota{{\hbox{???}}}
\def\cLear{\by=\ota\paper=\ota\book=\ota\jour=\ota\yr=\ota
\pages=\ota\vol=\ota\publ=\ota}
\def\endpaper{\the\by, \textit{\the\paper},
{\the\jour} \textbf{\the\vol} (\the\yr), \the\pages.\cLear}
\def\endbook{\the\by, \textit{\the\book}, \the\publ, \the\yr.\cLear}
\def\endprep{\the\by, \textit{\the\paper}, \the\jour.}
\def\endprepkma{\the\by, \textit{\the\paper}, \the\jour, available on http://adela.\-karlin.\-mff.\-cuni.\-cz/\~{ }rokyta/\-preprint/\-index.php.\cLear}
\def\endproc{\the\by, \textit{\the\paper}, \the\book,
\the\publ, \the\yr, \the\pages.\cLear}
\def\endper{\the\by, \textit{personal communication}.\cLear}




\title[Weak aproximate fixed point property]
{Spaces not containing $\ell_1$ have weak aproximate fixed point property}

\author[O.F.K. Kalenda]{Ond\v{r}ej  F.K. Kalenda }
    \address{Department of Mathematical Analysis \\
Faculty of Mathematics and Physic\\ Charles University\\
Sokolovsk\'{a} 83, 186 \ 75\\Praha 8, Czech Republic}
 \email{kalenda@karlin.mff.cuni.cz}

\thanks{The research was supported in part by the grant
GAAV IAA 100190901  and in part by the Research Project
MSM~0021620839 from the Czech Ministry of Education.}

\subjclass[2000]{47H10, 46A50}
\date{}

\keywords{weak approximate fixed point property; $\ell_1$ sequence; Fr\'echet-Urysohn space}

\hoffset=-15mm

\voffset=-5mm

\setlength{\textwidth}{160mm}

\setlength{\textheight}{229mm}

    \begin{document}

\begin{abstract}
A nonempty closed convex bounded subset $C$ of a Banach space is said to have the weak approximate fixed point property if for every continuous map $f:C\to C$ there is a sequence $\{x_n\}$ in $C$ such that $x_n-f(x_n)$ converge weakly to $0$. We prove in particular that $C$ has this property whenever it contains no sequence equivalent to the standard basis of $\ell_1$. As a byproduct we obtain a characterization of Banach spaces not containing $\ell_1$ in terms of the weak topology.
\end{abstract}

\maketitle

\section{Introduction and main results}

Let $X$ be a real Banach space and $C$ a nonempty closed convex bounded subset of $X$. The set $C$ is said to have the {\it approximate fixed point property} (shortly {\it afp property}) if for every continuous mapping 
$f:C\to C$ there is a sequence $\{x_n\}$ in $C$ such that $x_n-f(x_n)\to0$. The set $C$ is said to have the {\it weak approximate fixed point property} (shortly {\it weak afp property}) if for every continuous mapping 
$f:C\to C$ there is a sequence $\{x_n\}$ in $C$ such that the sequence $\{x_n-f(x_n)\}$ weakly converges to $0$. 

The study of
these notions was started by C. S. Barroso \cite{Ba} in topological vector spaces
where, in particular, the weak afpp for weakly compact convex subsets of
Banach spaces was proved, and after by C.S. Barroso and P.-K. Lin \cite{BL} in
Banach spaces for general bounded, closed convex sets with emphazis on
geometrical aspects.

Our terminology follows \cite{BL}. Anyway, it is worth to remark that
the notion of the afp property in this context does not have a good meaning.
Indeed, if $C$ is compact, then any continuous selfmap of $C$ has even a fixed point by Schauder's theorem (see e.g. \cite[p. 151, Theorem 183]{HHZ}). If $C$ is not compact, then it does not have the afp property by a result of P.-K.\ Lin and Y.\ Sternfeld \cite[Theorem 1]{LS}.  
Anyway it may have a sense in case of non-complete $X$ or non-closed $C$. A Lipschitz version of this property is studied in \cite{LS}. 

For the weak afp property the situation is different:

A Banach space $X$ is said to have the {\it weak approximate fixed point property} if each nonempty closed convex bounded subset of $X$ has the weak afp property.

This notion was studied by C.S.\ Barroso and P.-K.\ Lin in \cite{BL}. They proved that Asplund space do have the weak afp and asked in Problem 1.1 whether the same is true for spaces not containing $\ell_1$. In the present paper we answer this question affirmatively. This is the content of the following theorem.

\begin{thm}
Let $X$ be a Banach space. Then $X$ has the weak approximate fixed point property if and only if $X$ contains no isomorphic copy of $\ell_1$.
\end{thm}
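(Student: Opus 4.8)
The plan is to prove both directions of the equivalence. The easier direction is to show that if $X$ contains an isomorphic copy of $\ell_1$, then $X$ fails the weak afp property. For this I would exhibit a single closed convex bounded set $C$ and a continuous self-map $f$ witnessing the failure. A natural candidate is to work inside the copy of $\ell_1$ and use the classical shift-type construction: on the positive face of the $\ell_1$-ball one can build a fixed-point-free map whose displacements stay bounded away from $0$ even in the weak topology, exploiting the fact that in $\ell_1$ weak convergence coincides with norm convergence (Schur's property). So the key point is that a copy of $\ell_1$ carries the Schur property, which upgrades "displacement not norm-null" to "displacement not weakly null", killing the weak afp property.

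The harder and main direction is the forward implication: if $X$ contains no isomorphic copy of $\ell_1$, then every nonempty closed convex bounded $C\subset X$ has the weak afp property. Let me fix a continuous $f:C\to C$. I want to produce a sequence $\{x_n\}$ in $C$ with $x_n-f(x_n)\to0$ weakly. The strategy I would pursue is to pass to the weak closure and use Rosenthal's $\ell_1$ theorem: since $X$ contains no $\ell_1$, every bounded sequence in $X$ has a weakly Cauchy subsequence. A cleaner topological reformulation, which I expect the paper to isolate as the real engine, is that the absence of $\ell_1$ is equivalent to some \emph{Fr\'echet--Urysohn}-type property of $C$ in its weak topology — this matches the abstract's promise of "a characterization of Banach spaces not containing $\ell_1$ in terms of the weak topology" and the keyword "Fr\'echet-Urysohn space." So the first sub-step is to establish that characterization: $X$ contains no $\ell_1$ iff the weak topology on bounded sets has the relevant sequential/angelic behaviour.

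Granting that characterization, the plan for producing the approximate fixed points is as follows. Consider the weakly closed convex hull $\wscl{C}$ and the map $g(x)=x-f(x)$. Suppose for contradiction that $0$ is \emph{not} in the weak closure of $g(C)$; then by Hahn--Banach separation there is a functional $\varphi\in X^*$ and $\delta>0$ with $\varphi(x-f(x))\ge\delta$ for all $x\in C$, i.e. $\varphi\circ f<\varphi$ uniformly on $C$. The idea is to derive a contradiction by iterating: starting from any $x_0$ and setting $x_{n+1}=f(x_n)$, the values $\varphi(x_n)$ strictly decrease by at least $\delta$ at each step, which is impossible because $\varphi$ is bounded on the bounded set $C$. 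This already shows $0$ lies in the \emph{weak closure} of $\{x-f(x):x\in C\}$. The remaining — and genuinely hard — step is to upgrade membership in the weak closure to the existence of an actual \emph{sequence} $x_n-f(x_n)\to0$ weakly; this is exactly where the Fr\'echet--Urysohn/angelicity of the weak topology (equivalently, the non-containment of $\ell_1$ via Rosenthal) is indispensable, since in general a point in a weak closure need not be a weak limit of a sequence.

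I expect the main obstacle to be precisely this last upgrade from closure-membership to sequential convergence, together with verifying that the iteration argument can be carried out continuously and that the relevant set $\{x-f(x):x\in C\}$ is suitably bounded and that its weak closure is controlled by the Fr\'echet--Urysohn property. Handling the separation argument when $0$ is a boundary (rather than exterior) point of the weak closure, and ensuring the countably many separating functionals can be combined, will require care; the continuity of $f$ and the angelic structure of the weak topology on bounded subsets of $\ell_1$-free spaces should be the tools that make it go through.
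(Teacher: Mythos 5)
Your overall architecture matches the paper's: Schur's property handles the direction where $X$ contains $\ell_1$, and for the converse the engine is indeed a Fr\'echet--Urysohn property of the weak topology on bounded sets obtained from Rosenthal's theorem via Bourgain--Fremlin--Talagrand, used to upgrade ``$0$ lies in the weak closure of the displacement set'' to the existence of a weakly null sequence of displacements. However, your proposed proof of the closure-membership step has a genuine gap. The set $S=\{x-f(x):x\in C\}$ is \emph{not convex}, so the assumption $0\notin\overline{S}^{w}$ does not yield, via Hahn--Banach, a single functional $\varphi$ and $\delta>0$ with $\varphi(x-f(x))\ge\delta$ on all of $C$; it only yields finitely many functionals $\varphi_1,\dots,\varphi_k$ and $\delta>0$ such that $\max_i|\varphi_i(x-f(x))|\ge\delta$ for every $x\in C$, and then your iteration $x_{n+1}=f(x_n)$ proves nothing, because the index $i$ witnessing the separation may change from point to point. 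What the single-functional iteration actually proves is the weaker statement that $0$ lies in the closed \emph{convex hull} of $S$ --- and that is not enough, since the Fr\'echet--Urysohn property would then only produce a weakly null sequence of convex combinations of displacements, not of displacements themselves. This is exactly why the paper does not argue this way: it quotes Lemma~\ref{L1} (Lemma 2.1 of Barroso--Lin), whose proof needs Brouwer's fixed point theorem together with a partition-of-unity argument to handle the finitely many separating functionals simultaneously.

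Two smaller points. First, before invoking the Fr\'echet--Urysohn result one must pass to a separable closed convex $f$-invariant subset $D\subset C$ (Lemma~\ref{L2}), because that result rests on Bourgain--Fremlin--Talagrand, which requires a Polish domain and hence applies to \emph{separable} bounded sets; your sketch skips this reduction, and it cannot be replaced by taking an arbitrary separable subset, since $0$ must remain in the weak closure of the displacements of the smaller, $f$-invariant set. Second, in the easy direction, the plain shift on the positive face of $B_{\ell_1}$ does \emph{not} have displacements bounded away from zero: for $x=\frac1n\sum_{i=1}^n e_i$ one gets $\|x-Sx\|=2/n\to 0$. So no off-the-shelf ``shift-type construction'' is available; the paper instead invokes the theorem of Lin and Sternfeld that every noncompact closed convex bounded set fails the (norm) approximate fixed point property, and then applies Schur exactly as you propose.
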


This theorem is an immediate consequence of the following more general theorem:

\begin{thm}\label{convex} 
Let $X$ be a Banach space and $C$ a nonempty closed convex bounded subset of $X$. Then the following assertions are equivalent.
\begin{itemize}
	\item[(1)] Each nonempty closed convex subset of $C$ has the weak approximate fixed point property.
	\item[(2)] $C$ contains no sequence equivalent to the standard basis of $\ell_1$.
\end{itemize}
\end{thm}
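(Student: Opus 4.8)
The statement is an equivalence, and I would treat the two implications quite differently: the substance lies in $(2)\Rightarrow(1)$, while $(1)\Rightarrow(2)$ is a short argument by contraposition, which I dispatch first. Suppose $C$ contains a sequence $(e_n)$ equivalent to the $\ell_1$-basis, and put $Y=\overline{\operatorname{span}}\{e_n\}\cong\ell_1$ and $D=\overline{\operatorname{co}}\{e_n:n\in\mathbb N\}\subseteq C$. Then $D$ is a non-compact closed bounded convex subset of $C$ lying inside $Y$. For a sequence lying in the subspace $Y$, weak convergence in $X$ coincides with weak convergence in $Y$ (functionals extend and restrict by Hahn--Banach), and by the Schur property of $Y\cong\ell_1$ the latter coincides with norm convergence. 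Hence the weak afp property of $D$ would force, for every continuous $f\colon D\to D$, a sequence with $x_n-f(x_n)\to0$ in norm; that is, $D$ would have the ordinary afp property. But $D$ is non-compact, so by the Lin--Sternfeld theorem quoted in the introduction some continuous selfmap $f$ satisfies $\inf_{x\in D}\|x-f(x)\|>0$, witnessing the failure of the weak afp property of $D$. Thus $(1)$ fails.

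For the main direction $(2)\Rightarrow(1)$, fix a closed convex $D\subseteq C$ and a continuous $f\colon D\to D$, and set $g(x)=x-f(x)$. The plan is to prove $0\in\overline{g(D)}^{\,w}$ (a net/cluster-point statement) and then to upgrade this to a weakly null sequence. For the first step I would reduce to finite dimensions by a Schauder/Brouwer argument: given $\phi_1,\dots,\phi_k\in X^*$ and $\varepsilon>0$, the seminorm $p(\,\cdot\,)=\max_j|\phi_j(\,\cdot\,)|$ makes $D$ totally bounded, so I can choose $y_1,\dots,y_N\in D$ forming an $\varepsilon$-net for $(D,p)$, use a subordinate partition of unity to build a continuous retraction $\pi\colon D\to\operatorname{co}\{y_1,\dots,y_N\}$ with $p(\pi(z)-z)<\varepsilon$, and apply Brouwer's theorem to $\pi\circ f$ on the finite-dimensional compact convex set $\operatorname{co}\{y_1,\dots,y_N\}$. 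A fixed point $x^*$ of $\pi\circ f$ then satisfies $p(x^*-f(x^*))=p(\pi(f(x^*))-f(x^*))<\varepsilon$, which places a point of $g(D)$ inside the basic weak neighbourhood of $0$; hence $0\in\overline{g(D)}^{\,w}$.

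The second step is where hypothesis $(2)$ enters decisively. First I note that $g(D)\subseteq D-D\subseteq C-C$ still contains no $\ell_1$-sequence: if $(u_n-v_n)$ were $\ell_1$-equivalent then, applying Rosenthal's theorem twice, I could pass to a subsequence along which both $(u_n)$ and $(v_n)$ are weakly Cauchy, forcing $(u_n-v_n)$ to be weakly Cauchy and hence not $\ell_1$-equivalent, a contradiction. Consequently, by Rosenthal's $\ell_1$-theorem together with the Bourgain--Fremlin--Talagrand result that the weak$^*$ closure in $X^{**}$ of an $\ell_1$-free bounded set is angelic (reducing to a separable subspace carrying the relevant functionals, so that $(B_{X^*},w^*)$ is metrizable), the compact space $K=\overline{g(D)}^{\,w^*}\subseteq X^{**}$ is angelic. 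Since $0\in\overline{g(D)}^{\,w}\subseteq K$ and an angelic compactum is Fr\'echet--Urysohn, there is a sequence $(x_n)$ in $D$ with $g(x_n)\to0$ in the $w^*$-topology of $X^{**}$; as each $g(x_n)$ lies in $X$, this is precisely weak convergence to $0$ in $X$. Thus $D$ has the weak afp property, and since every closed convex $D\subseteq C$ inherits condition $(2)$, assertion $(1)$ follows.

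The decisive and most delicate point is this second step: upgrading the weak cluster point $0$ of $g(D)$ to a genuine weakly null sequence. The Schauder--Brouwer argument produces only a net of approximate fixed points, and in the presence of $\ell_1$ — where the weak topology fails to be Fr\'echet--Urysohn — such a net need not contain any weakly convergent subsequence, so the implication genuinely uses $(2)$. Making the angelicity precise, i.e.\ verifying that $\ell_1$-freeness of $C$ really yields the Fr\'echet--Urysohn behaviour of the relevant weak$^*$-compact set (presumably the ``characterization in terms of the weak topology'' advertised in the abstract), is the heart of the matter, and I would expect it to be isolated as a separate lemma.
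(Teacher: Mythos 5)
Your proof of (1)$\Rightarrow$(2) is exactly the paper's argument (Schur property of the copy of $\ell_1$ plus Lin--Sternfeld), and the skeleton of your (2)$\Rightarrow$(1) also matches the paper's: your Step 1 is a correct self-contained proof of what the paper simply quotes as Lemma~\ref{L1}, and your double application of Rosenthal's theorem is precisely the step $(*)$ inside the paper's Proposition~\ref{FU}. The genuine gap is in your Step 2. The statement you lean on --- that the weak* closure in $X^{**}$ of an $\ell_1$-free bounded set is angelic --- is \emph{false} without a separability hypothesis, and nothing in your argument supplies one: $D\subseteq C$ is an arbitrary closed convex set and may be non-separable. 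Concretely, take $X=c_0(\omega_1)$, which contains no $\ell_1$-sequence (every sequence in $c_0(\omega_1)$ has countable joint support, hence lies in an isometric copy of $c_0$), and let $A=\{\chi_F: F\subseteq\omega_1\ \text{finite}\}\subseteq B_X$. For any $\phi_1,\dots,\phi_k\in X^*=\ell_1(\omega_1)$ one can choose a finite $F$ absorbing most of their mass, so the constant function $\mathbf 1$ lies in $\wscl{\kappa(A)}$; yet any sequence in $A$ has countable joint support and therefore cannot converge weak* (i.e.\ coordinatewise) to $\mathbf 1$. Thus $\wscl{\kappa(A)}$ is not Fr\'echet-Urysohn, hence not angelic. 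The failure even occurs for sets of the exact form you need: with $D=B_X$ and $f\equiv 0$ one has $g(D)=B_X$, and by Goldstine's theorem $\wscl{\kappa(B_X)}=B_{X^{**}}=B_{\ell_\infty(\omega_1)}$, which in the weak* topology is homeomorphic to $[-1,1]^{\omega_1}$ and is not angelic. Your parenthetical remedy does not close this hole: $(B_{X^*},w^*)$ is metrizable if and only if $X$ itself is separable, and when $g(D)$ is non-separable (as in the example just given) there is no separable subspace containing it; the Bourgain--Fremlin--Talagrand theorem genuinely requires a Polish domain.

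What is missing is exactly the paper's Lemma~\ref{L2} (from Barroso--Lin): for every continuous $f:D\to D$ there is a nonempty closed convex \emph{separable} set $D_0\subseteq D$ with $f(D_0)\subseteq D_0$, obtained by iterating $D_{n+1}=\overline{\operatorname{co}}\left(D_n\cup f(D_n)\right)$ from a singleton and taking the closure of the increasing union. With this invariant separable set in hand, both of your steps run verbatim with $D_0$ in place of $D$: the set $g(D_0)\subseteq D_0-D_0$ lies in the separable subspace $\overline{\operatorname{span}}(D_0)$, where your angelicity claim is correct --- this separable version is exactly the paper's Proposition~\ref{FU}, proved via Corollary~\ref{cor-BFT} --- and a sequence that is weakly null in that subspace is weakly null in $X$ by Hahn--Banach. (An alternative repair, which you do not invoke, is Kaplansky's theorem that the weak topology of a Banach space has countable tightness: it extracts a countable $A_0\subseteq g(D)$ with $0\in\overline{A_0}^{\,w}$, after which one works inside $\overline{\operatorname{span}}(A_0)$.) Without one of these reductions the sentence ``hence $K$ is angelic'' is false as stated, so your proof does not go through in the non-separable case, which is the only case where Theorem~\ref{convex} goes beyond the separable setting.
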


Let us recall that a bounded sequence $\{x_n\}$ is equivalent to the standard basis of $\ell_1$ if there is a constant $c>0$ such that for any $N\in\en$ and any choice of $a_1,\dots,a_N\in\er$ we have
$$\left\|\sum_{n=1}^N a_n x_n\right\|\ge c\sum_{n=1}^N |a_n|.$$
It means that the mapping $T:\ell_1\to X$ defined by $T(\{a_n\})=\sum_{n=1}^\infty a_n x_n$ is an isomorphic embedding. Such sequences $\{x_n\}$ will be called {\it $\ell_1$-sequences}.

The implication $(1)\Rightarrow(2)$ is known to be true. Indeed, suppose that $\{x_n\}$ is an $\ell_1$-sequence contained in $C$. Set $D$ to be the closed convex hull of the set $\{x_n:n\in\en\}$.
Let $T$ be the mapping defined in the previous paragraph. Then $Y=T(\ell_1)$ is a subspace of $X$ which is isomorphic to $\ell_1$ and contains $D$. So, by Schur's theorem (see e.g. \cite[p. 74, Theorem 99]{HHZ}), weakly convergent sequences in $Y$ are norm convergent. So, if $D$ had the weak afp property, it would have the afp property as well.
But it is impossible by the already quoted \cite[Theorem 1]{LS} as $D$ is not compact.

We remark that Theorem~\ref{convex} immediately implies that weakly compact sets have weak afp property which also follows from a result of C.S.\ Barroso \cite[Theorem 3.1]{Ba}.

We finish this section by recalling and commenting two results from \cite{BL}.

\begin{lemma}\label{L1} Let $X$ be any Banach space, $C$ any nonempty closed convex bounded subset of $X$ and $f:C\to C$ any continuous mapping. Then the point $0$ is in the weak closure of the set $\{x-f(x): x\in C\}$.
\end{lemma}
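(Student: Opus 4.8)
The plan is to prove that $0$ is in the weak closure of the set $S = \{x - f(x) : x \in C\}$. The natural approach is to argue by contradiction using a separation/Hahn-Banach theorem together with some fixed-point-type machinery. The key observation is that if $0$ were \emph{not} in the weak closure $\wscl{S}$, then since $\wscl{S}$ is weakly closed and convex-like in the relevant sense, one could separate $0$ from a suitable weakly compact convex set, producing a functional $\varphi \in X^*$ and $\delta > 0$ such that $\varphi(x - f(x)) \ge \delta$ for all $x \in C$. Actually, since $S$ itself need not be convex, I would first pass to finite-dimensional reductions: the weak closure of a set contains $0$ if and only if for every finite collection $\varphi_1,\dots,\varphi_k \in X^*$ and every $\varepsilon > 0$ there is an $x \in C$ with $|\varphi_i(x - f(x))| < \varepsilon$ for all $i$.

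This finite reduction is the heart of the matter, and it is exactly the setting where a finite-dimensional fixed-point theorem applies. Fix functionals $\varphi_1,\dots,\varphi_k$ and define $\Phi : C \to \er^k$ by $\Phi(x) = (\varphi_1(x),\dots,\varphi_k(x))$. The goal becomes finding $x$ with $\Phi(x) \approx \Phi(f(x))$. The strategy is to consider the compact convex set $K = \ov{\Phi(C)} \subseteq \er^k$ and try to build an approximate fixed point of the induced finite-dimensional dynamics. More precisely, I would use a partition-of-unity argument: cover $K$ by finitely many small balls, choose points $x_j \in C$ with $\Phi(x_j)$ near the centers, and construct a continuous map $g$ of a finite-dimensional simplex (spanned by the $\Phi(x_j)$) into itself by averaging. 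Brouwer's fixed point theorem then yields a point that, when pulled back through $f$ and $\Phi$, is nearly fixed in the $\varphi_i$-directions.

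The main obstacle I anticipate is that $f$ is only norm-continuous (not weakly continuous), and $C$ is not compact, so one cannot directly apply Schauder's theorem in $C$ itself nor push $f$ down to a genuinely continuous self-map of the finite-dimensional set $K$. The correct device is Schauder-type approximation: one approximates $f$ by maps with finite-dimensional range and composes with the projection $\Phi$, being careful that the approximation error is controlled only in the finitely many chosen functional directions rather than in norm. I would therefore set things up so that the finite-dimensional Brouwer argument only ever needs to know $\Phi(f(x))$ to within $\varepsilon$, which is a continuous function of $x$, and use a finite $\varepsilon$-net together with a convex-combination map into $\conv\{\Phi(x_1),\dots,\Phi(x_m)\}$.

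Concretely, I would take a finite set of points whose $\Phi$-images form an $\varepsilon$-net of $K$, let $\{\lambda_j\}$ be a continuous partition of unity on $\er^k$ subordinate to the corresponding open balls, and define the continuous map $h(t) = \sum_j \lambda_j\bigl(\Phi(f(x_j))\bigr)\,\Phi(x_j)$ on the simplex $\conv\{\Phi(x_1),\dots,\Phi(x_m)\}$ into itself. Brouwer gives a fixed point $t^* = \sum_j \mu_j \Phi(x_j)$, and setting $x^* = \sum_j \mu_j x_j \in C$ (using convexity of $C$), one checks by the standard Schauder estimate that $\|\Phi(f(x^*)) - t^*\| \le \varepsilon$ while $\Phi(x^*) = t^*$, hence $\|\Phi(x^*) - \Phi(f(x^*))\| \le \varepsilon$, which is exactly $|\varphi_i(x^* - f(x^*))| \le \varepsilon$ for each $i$. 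Letting $\varepsilon \to 0$ and ranging over all finite families of functionals shows $0 \in \wscl{S}$, completing the proof.
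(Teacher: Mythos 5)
Your overall plan---reduce to finitely many functionals, project by $\Phi=(\varphi_1,\dots,\varphi_k)$ into $\er^k$, and invoke Brouwer through a partition of unity---is the same route as the proof this paper relies on (the paper does not reprove the lemma; it cites Lemma 2.1 of Barroso--Lin, which uses exactly Brouwer's theorem plus partitions of unity), and your first paragraph's reduction is correct. But the concrete construction in your final paragraph has a genuine gap, located exactly at the obstacle you yourself flagged. First, as written, $h(t)=\sum_j \lambda_j\bigl(\Phi(f(x_j))\bigr)\,\Phi(x_j)$ does not depend on $t$ at all: it is a constant map (whose coefficients $\lambda_j(\Phi(f(x_j)))$ need not even sum to $1$), so Brouwer yields nothing. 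On the charitable reading $h(t)=\sum_j \lambda_j(t)\,\Phi(f(x_j))$, the step that fails is the claimed ``standard Schauder estimate'' $\|\Phi(f(x^*))-t^*\|\le\varepsilon$. At the fixed point you know only that $t^*=\Phi(x^*)$ lies within $\varepsilon$ of those $\Phi(x_j)$ with $\lambda_j(t^*)>0$; to conclude that $\Phi(f(x^*))$ is close to the corresponding average of the frozen values $\Phi(f(x_j))$ you would need the implication: $\|\Phi(x)-\Phi(x_j)\|$ small $\Rightarrow$ $\|\Phi(f(x))-\Phi(f(x_j))\|$ small, i.e.\ that $f$ descends through $\Phi$ to a uniformly continuous map on $K$. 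That is false in general: $\Phi$ has enormous fibers, and norm-continuity of $f$ gives no control between points that are close only in $k$ functional directions. The decisive symptom is that your construction evaluates $f$ only at the finitely many net points $x_j$, so the continuity of $f$ is never used anywhere---yet the lemma is false for discontinuous $f$. Take $X=\er$, $C=[0,1]$, $f(x)=x+\tfrac12$ for $x\le\tfrac12$ and $f(x)=x-\tfrac12$ for $x>\tfrac12$; then $|x-f(x)|\equiv\tfrac12$, so $0$ is not in the (weak $=$ norm) closure of $\{x-f(x)\}$. Your argument would run verbatim on this $f$, since it needs only the values $f(x_j)$, and would produce $x^*$ with $|x^*-f(x^*)|\le\varepsilon<\tfrac12$, a contradiction; hence the claimed estimate cannot hold.

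The repair is to evaluate $f$ at the moving point rather than at the net points. Let $\{\lambda_j\}_{j=1}^m$ be a partition of unity on $K=\ov{\Phi(C)}$ subordinate to the open $\varepsilon$-balls around the $\Phi(x_j)$, and define $T$ on the coefficient simplex $\Delta=\{\mu\in[0,1]^m:\sum_j\mu_j=1\}$ by $T(\mu)_j=\lambda_j\bigl(\Phi(f(x_\mu))\bigr)$, where $x_\mu=\sum_i\mu_i x_i\in C$. Norm-continuity of $f$ (together with continuity of $\Phi$ and of the $\lambda_j$) makes $T$ continuous, so Brouwer gives $\mu^*=T(\mu^*)$. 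Put $x^*=x_{\mu^*}$. Every $j$ with $\mu^*_j>0$ satisfies $\lambda_j(\Phi(f(x^*)))>0$, hence $\|\Phi(f(x^*))-\Phi(x_j)\|<\varepsilon$, and therefore
$\|\Phi(x^*)-\Phi(f(x^*))\|=\bigl\|\sum_j\mu^*_j\bigl(\Phi(x_j)-\Phi(f(x^*))\bigr)\bigr\|<\varepsilon$,
which is the desired conclusion $|\varphi_i(x^*-f(x^*))|<\varepsilon$ for all $i$. Note how this version uses continuity of $f$ essentially (it fails on the counterexample above, where $T$ is discontinuous); it is in substance the Barroso--Lin argument that the paper quotes.
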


This lemma is proved in \cite[Lemma 2.1]{BL} using Brouwer's fixed point theorem and paracompactness of metric spaces. 

\begin{lemma}\label{L2} Let $X$ be any Banach space, $C$ any nonempty closed convex bounded subset of $X$ and $f:C\to C$ any continuous mapping. Then there is a nonempty closed convex separable subset $D\subset C$ with $f(D)\subset D$.
\end{lemma}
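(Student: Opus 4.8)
The plan is to build $D$ by an exhaustion argument: starting from a single point, I would alternately close up under the map $f$ and under the operation of taking closed convex hulls, keeping every stage separable. The whole construction rests on three stability properties of separability, which I would record first. The continuous image of a separable set is separable (the image of a countable dense subset is dense in it, by continuity of $f$); a countable union of separable sets is separable; and---the only nonroutine point---the closed convex hull of a separable set $A\subset C$ is separable. For this last fact I would take a countable dense subset $\{a_k\}$ of $A$ and observe that the collection of all rational convex combinations $\sum_{i} q_i a_{k_i}$ (finitely many terms, $q_i\in\qe$, $q_i\ge0$, $\sum_i q_i=1$) is countable and, since $A$ is bounded, dense in $\conv A$, hence dense in $\ov{\conv}\,A$.

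With these tools in hand, I would construct inductively an increasing sequence $D_1\subset D_2\subset\cdots$ of separable, closed, convex subsets of $C$. Start with $D_1=\{x_0\}$ for any fixed $x_0\in C$. Given $D_n$, set $D_{n+1}=\ov{\conv}(D_n\cup f(D_n))$. Since $f(C)\subset C$ and $C$ is closed and convex, $D_{n+1}\subset C$; since $D_n$ is separable and $f$ is continuous, $f(D_n)$ is separable, so $D_{n+1}$ is separable by the stability properties above; and $D_n\subset D_{n+1}$ by construction.

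Finally I would set $D=\ov{\bigcup_{n\in\en} D_n}$. This set is closed by definition, separable as the closure of a countable union of separable sets, and contained in $C$ because each $D_n\subset C$ and $C$ is closed. It is convex because $\bigcup_n D_n$ is an increasing union of convex sets, hence convex, and the closure of a convex set is convex.

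The one step that needs a genuine argument---and the main obstacle---is verifying $f(D)\subset D$, since invariance is built in only at the level of the $D_n$ and must be transported to the closure. I would first observe that $f(D_n)\subset D_{n+1}\subset\bigcup_m D_m\subset D$ for every $n$, so $f$ maps $\bigcup_n D_n$ into $D$. Then for an arbitrary $x\in D$ I would choose $x_k\in\bigcup_n D_n$ with $x_k\to x$; continuity of $f$ gives $f(x_k)\to f(x)$, and since $f(x_k)\in D$ and $D$ is closed, $f(x)\in D$. This yields $f(D)\subset D$ and completes the proof.
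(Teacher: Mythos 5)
Your proof is correct and follows essentially the same route as the argument the paper cites from \cite{BL}: the iterative construction $D_{n+1}=\ov{\conv}(D_n\cup f(D_n))$ starting from a point, followed by taking the closure of the increasing union, with invariance under $f$ passed to the closure by continuity. All the supporting steps (separability of closed convex hulls via rational convex combinations, convexity of the increasing union, closedness of $D$) are verified correctly, so there is nothing to add.
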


This is easy and is proved in the second part of the proof of Theorem 2.2 in \cite{BL}.

In view of Lemma~\ref{L1} to prove the weak afp property one needs to reach the point $0$ by a limit of a sequence, not just by a limit of a net. In \cite[Theorem 2.2]{BL} it is done by using implicitly the metrizability of the weak topology on bounded sets of a separable Asplund space. We show that it is also possible under the weaker assumption that the space does not contain a copy of $\ell_1$. Topological results which enable us to do so are contained in the following section.

\section{$\ell_1$-sequences and Fr\'echet-Urysohn property of the weak topology}

Let us recall that a topological space $T$ is called {\it Fr\'echet-Urysohn} if the closures of subsets of $T$ are described using sequences, i.e. if whenever $A\subset T$ and $x\in T$ is such that $x\in\overline{A}$, there is a sequence $\{x_n\}$ in $A$ with $x_n\to x$. Metrizable spaces are Fr\'echet-Urysohn but there are many nonmetrizable Fr\'echet-Urysohn spaces (for examples see the results below).

We will need the following deep result of J.\ Bourgain,  D.H.\ Fremlin and M.\ Talagrand \cite[Theorem 3F]{BFT}:

\begin{thm}\label{t-BFT} Let $P$ be a Polish space (i.e., a separable completely metrizable space). Denote by $B_1(P)$ the space of all real-valued functions on $P$ which are of the first Baire class and equip this space with the topology of pointwise convergence. Suppose that $A\subset B_1(P)$ is relatively countably compact in $B_1(P)$ (i.e., each sequence in $A$ has a cluster point in $B_1(P)$). Then the closure $\overline A$ of $A$ in $B_1(P)$ is compact and Fr\'echet-Urysohn.
\end{thm}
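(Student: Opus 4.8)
The plan is to reduce the theorem to a single sequential extraction statement and then to read off both conclusions, compactness of $\overline A$ and its Fr\'echet-Urysohn property, from it. First, since $A$ is relatively countably compact in $B_1(P)\subseteq\er^P$ it is pointwise bounded, for a sequence unbounded at some $p\in P$ would have no cluster point; hence $A$ is contained in a product of compact intervals and, by Tychonoff's theorem, its closure in $\er^P$ is compact. The real content therefore lies in showing (a) that every point of $\overline A$ belongs to $B_1(P)$, so the closure taken in $B_1(P)$ coincides with this compact closure, and (b) that the closure is Fr\'echet-Urysohn. The key intermediate assertion I would isolate is the following.

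\begin{claim}
Every sequence in $A$ has a subsequence that converges pointwise on $P$ to a function in $B_1(P)$.
\end{claim}

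Granting this, the limit $g$ of a convergent subsequence is its \emph{unique} cluster point in $\er^P$; since by hypothesis that subsequence has a cluster point in $B_1(P)$, necessarily $g\in B_1(P)$, so no separate ``Baire class one'' argument for pointwise limits (which are a priori only of the second class) is needed.

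The heart of the matter is this claim, and I would attack it through a Rosenthal-type dichotomy for uniformly bounded sequences of real functions: given $\{f_n\}$, after passing to a subsequence one has either (i) $\{f_n\}$ converges pointwise on $P$, or (ii) there are reals $r<s$ for which the level sets $\{f_n\le r\}$ and $\{f_n\ge s\}$ form a Boolean-independent family. The role of the first-class hypothesis is precisely to exclude alternative (ii): by Baire's characterization a function on a Polish space is of the first class if and only if its restriction to every nonempty closed subset has a point of continuity, whereas an independent sequence manufactures a nonempty closed set on which suitable cluster points oscillate by at least $s-r$ everywhere, hence have no point of continuity there. Such a cluster point would lie outside $B_1(P)$, so (ii) contradicts the relative countable compactness of $A$ in $B_1(P)$, and only (i) survives. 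I expect the dichotomy itself to be the main obstacle: it is a genuinely combinatorial and descriptive argument combining Ramsey-type reasoning on subsets of $\en$ with the Baire category theorem on $P$, the first-class hypothesis keeping the level sets of controlled complexity so that the category arguments apply. This is the technical core of \cite{BFT}.

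Finally, for the Fr\'echet-Urysohn conclusion I would fix $g\in\overline A$ and a countable dense set $\{p_i\}\subseteq P$ (separability of $P$), and, using that $g$ is a cluster point, construct inductively $f_n\in A$ with $|f_n(p_i)-g(p_i)|<1/n$ for $i\le n$. By the Claim some subsequence converges pointwise to $h\in B_1(P)$ with $h=g$ on the dense set $\{p_i\}$. The delicate step is to upgrade ``$h=g$ on a dense set'' to ``$h=g$ on all of $P$'', which I would arrange by driving the inductive selection with the points of continuity furnished by Baire's characterization for $g$ and $h$, so that any surviving discrepancy would have to concentrate on a nonempty closed set contradicting that criterion. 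The resulting sequence converges to $g$, proving $\overline A$ Fr\'echet-Urysohn; and $\overline A$ is compact since it is the sequential closure of $A$ inside the compact set $\overline A^{\,\er^P}\subseteq B_1(P)$.
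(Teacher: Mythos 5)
First, a point of comparison you could not have known: the paper offers no proof of this statement at all — it is quoted as a deep external result of Bourgain, Fremlin and Talagrand \cite[Theorem 3F]{BFT}, and the two conclusions you sketch (the closure staying inside $B_1(P)$, and the Fr\'echet--Urysohn property) are precisely the two hard theorems of that paper. Measured against the actual BFT argument, your proposal has a genuine gap in each half. The first gap is logical: your Claim is purely sequential, but conclusion (a) — that every point of $\overline{A}^{\,\er^P}$ lies in $B_1(P)$ — concerns arbitrary closure points in the uncountable product $\er^P$, where closures are not a priori described by sequences; indeed, that they are so described is exactly the Fr\'echet--Urysohn property you are trying to prove, so deducing (a) from the Claim is circular. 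A cluster point $g$ of $A$ need not be the limit of any pointwise convergent sequence from $A$, so the Claim says nothing about it, and your final sentence quietly assumes $\overline{A}^{\,\er^P}\subseteq B_1(P)$ without proof. The actual argument runs in the opposite direction: supposing some $g\in\overline{A}^{\,\er^P}$ is not Baire-1, one takes a nonempty closed $F\subseteq P$ on which $g|_F$ has no point of continuity (Baire's characterization), and then, using that $g\in\overline{A}$ and that each $f\in A$, being Baire-1, is continuous on a comeager subset of $F$, one selects by a Baire-category argument a sequence in $A$ none of whose cluster points is Baire-1, contradicting relative countable compactness. This category-driven selection is also what your exclusion of alternative (ii) is missing: on a non-compact Polish space the independent level sets $\{f_n\le r\}$, $\{f_n\ge s\}$ are neither closed nor guaranteed to have nonempty infinite intersections, so ``manufacturing a nonempty closed set on which cluster points oscillate'' is not automatic. (Also, the dichotomy you invoke is Rosenthal's lemma, i.e.\ the combinatorial core of \cite{R}, not of \cite{BFT}; the core of \cite{BFT} lies elsewhere.)

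The second and more serious gap is the Fr\'echet--Urysohn step. Matching $g$ on a countable dense set $\{p_i\}$ and extracting a pointwise convergent subsequence yields $h\in B_1(P)$ with $h=g$ on $\{p_i\}$, but nothing can upgrade this to $h=g$ on all of $P$: two Baire-1 functions can agree on any prescribed countable set and differ elsewhere (take $g\equiv 0$ and $h$ the indicator of a single point outside the set), and your proposed repair is circular, since the continuity points of $h$ that are supposed to ``drive the inductive selection'' are unavailable until after the subsequence has been extracted. The failure is not cosmetic: a compact, sequentially compact, countably tight space need not be Fr\'echet--Urysohn (the one-point compactification of a Mr\'owka space is a standard counterexample), so no scheme of the form ``sequential extraction plus agreement on countably many points'' can possibly suffice; the Baire-1 structure must enter the Fr\'echet--Urysohn argument itself in an essential, quantitative way. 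That is exactly the deepest part of \cite{BFT}, and it is the part your sketch replaces with a hope.
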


In fact, we need a slightly weaker version formulated in the following corollary.

\begin{cor}\label{cor-BFT} Let $P$ be a Polish space and $A$ be a set of real-valued continuous functions on $P$. Suppose that each sequence in $A$ has a pointwise convergent subsequence. Then the closure of $A$ in $\er^P$ is a Fr\'echet-Urysohn compact space contained in $B_1(P)$.
\end{cor}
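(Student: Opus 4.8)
The plan is to deduce Corollary~\ref{cor-BFT} directly from Theorem~\ref{t-BFT} by verifying that the hypotheses of the latter are met.  The target is to show that the pointwise closure $\overline A$ of $A$ in $\er^P$ is a Fr\'echet-Urysohn compact space contained in $B_1(P)$, given only that every sequence in $A$ has a pointwise convergent subsequence.  First I would observe that every continuous function on $P$ is of the first Baire class, so $A\subset B_1(P)$ to begin with.  The key point to establish is then that $A$ is relatively countably compact \emph{in $B_1(P)$}, not merely in $\er^P$: given any sequence in $A$, the hypothesis supplies a subsequence converging pointwise to some $g\in\er^P$, and I must argue that this limit $g$ actually lies in $B_1(P)$.

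The crucial step is therefore the closure property ``a pointwise limit of a sequence of continuous functions is of the first Baire class.''  This is a classical fact: a pointwise limit of continuous real-valued functions on a metric space is Baire-one by definition (indeed the first Baire class is usually \emph{defined} as the set of such pointwise limits of sequences of continuous functions).  Since $P$ is Polish, hence metrizable, each such pointwise limit $g$ belongs to $B_1(P)$.  Consequently the cluster point $g$ of the chosen subsequence lies in $B_1(P)$, which is exactly the statement that $A$ is relatively countably compact in $B_1(P)$.

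With relative countable compactness in $B_1(P)$ verified, Theorem~\ref{t-BFT} applies verbatim and yields that the closure of $A$ \emph{in $B_1(P)$} is compact and Fr\'echet-Urysohn.  The only remaining bookkeeping is to reconcile ``closure in $B_1(P)$'' with ``closure in $\er^P$.''  Since that closure is compact, it is in particular closed in the Hausdorff space $\er^P$; as it contains $A$ and is contained in $\er^P$, it coincides with $\overline A$, the closure of $A$ in $\er^P$.  Thus $\overline A \subset B_1(P)$ is compact and Fr\'echet-Urysohn, as required.

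I do not anticipate a serious obstacle here, since the corollary is a routine specialization: the relative countable compactness hypothesis of Theorem~\ref{t-BFT} is weakened in the corollary to the cleaner-to-check condition that sequences have convergent subsequences, and the continuity hypothesis on $A$ is precisely what guarantees that the resulting limits stay inside $B_1(P)$.  The one point demanding a moment's care is the identification of the two closures, which relies on compactness forcing closedness in $\er^P$; everything else is a direct invocation of the Bourgain--Fremlin--Talagrand theorem.
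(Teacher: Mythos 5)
Your proof is correct and follows essentially the same route as the paper: observe $A\subset B_1(P)$, use the hypothesis plus the fact that pointwise limits of continuous functions are Baire-one to get relative countable compactness in $B_1(P)$, then invoke Theorem~\ref{t-BFT}. Your final bookkeeping step identifying the closure in $B_1(P)$ with the closure in $\er^P$ (via compactness and Hausdorffness) is a point the paper's proof leaves implicit, and it is handled correctly.
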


\begin{proof} $A$ is obviously contained in $B_1(P)$. Moreover, let $(f_n)$ be any sequence in $A$. By the assumption there is a subsequence $(f_{n_k})$ pointwise converging to some function $f$. As the functions $f_{n_k}$ are continuous, the limit function $f$ is of the first Baire class. Hence, it is a cluster point of $(f_n)$ in $B_1(P)$. So, $A$ is relatively countably compact in $B_1(P)$. The assertion now follows from Theorem~\ref{t-BFT}.
\end{proof}

Now we are ready to prove the following proposition which can be viewed as an improvent of a result due to E.\ Odell and H.P.\ Rosenthal \cite{OR} on characterization of separable spaces not containing $\ell_1$. We note that we use the results of \cite{BFT} and this paper was published three years after \cite{OR}.

\begin{prop}\label{FU} Let $X$ be a Banach space and $C$ be a bounded subset of $X$. 
If $C$ is norm-separable and contains no $\ell_1$-sequence, then the set
$$\wscl{\kappa(C-C)}=\wscl{\{\kappa(x-y): x,y\in C\}}$$
is Fr\'echet-Urysohn when equipped with the weak* topology, where $\kappa$ denotes the canonical embedding of $X$ into $X^{**}$. In particular,
$$\overline{C-C}^{w}=\overline{\{x-y: x,y\in C\}}^{w}$$
is Fr\'echet-Urysohn when equipped with the weak topology.
\end{prop}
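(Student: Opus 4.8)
The plan is to represent $C-C$ as a family of continuous functions on a Polish space and then apply Corollary~\ref{cor-BFT}. Since $C$ is norm-separable, the space $Y=\overline{\operatorname{span}}\,C$ is a separable Banach space with $C-C\subset Y$, so $P=(B_{Y^*},w^*)$ is compact metrizable, hence Polish. Writing $\kappa_Y$ for the canonical embedding of $Y$, each $z\in C-C$ yields a weak*-continuous function $\hat z=\kappa_Y(z)\restriction P$ on $P$. I would first check that the restriction map $\Phi\colon(Y^{**},w^*)\to\er^P$, $\Phi(\eta)=\eta\restriction P$, is a homeomorphism onto its image: it is continuous because weak* convergence gives pointwise convergence on $B_{Y^*}$, and its inverse is continuous because, by homogeneity, pointwise convergence on $B_{Y^*}$ forces weak* convergence on all of $Y^*$. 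As $C$ is bounded, $K=\wscl{\kappa_Y(C-C)}$ is weak* compact, so $\Phi$ carries $K$ homeomorphically onto the compact set $\Phi(K)$, which one identifies with the closure of $A=\{\hat z:z\in C-C\}$ in $\er^P$.

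The heart of the argument is verifying the hypothesis of Corollary~\ref{cor-BFT}, namely that every sequence in $A$ has a pointwise convergent subsequence. A sequence $(\hat z_n)$ converges pointwise on $P$ exactly when $(z_n)$ is weakly Cauchy in $Y$, so I must show that every sequence in the bounded set $C-C$ has a weakly Cauchy subsequence. This is where the hypothesis that $C$ contains no $\ell_1$-sequence enters, through Rosenthal's $\ell_1$ theorem: given $z_n=x_n-y_n$ with $x_n,y_n\in C$, the absence of $\ell_1$-sequences in $C$ forces $(x_n)$ to have a weakly Cauchy subsequence, and passing to a further subsequence makes $(y_n)$ weakly Cauchy as well, whence $(z_n)$ is weakly Cauchy along that subsequence. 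I expect this translation---from ``no $\ell_1$-sequence in $C$'' to ``every sequence in $C-C$ has a weakly Cauchy subsequence''---to be the main conceptual step; the remainder is bookkeeping. With the hypothesis verified, Corollary~\ref{cor-BFT} shows that the closure of $A$ in $\er^P$, that is $\Phi(K)$, is Fr\'echet-Urysohn, and since $\Phi\restriction K$ is a homeomorphism, so is $K$.

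Finally I would transfer this back to $X^{**}$. Let $r\colon X^*\to Y^*$ be the (surjective) restriction map; its adjoint $r^*\colon Y^{**}\to X^{**}$ is weak*-weak* continuous and injective and satisfies $\kappa(z)=r^*(\kappa_Y(z))$ for $z\in Y$. Being a continuous injection on the weak* compact set $K$, the map $r^*$ carries $K$ homeomorphically onto $r^*(K)=\wscl{\kappa(C-C)}$, which is therefore Fr\'echet-Urysohn. For the ``in particular'' statement, the canonical embedding $\kappa$ is a homeomorphism from $(X,w)$ onto its image in $(X^{**},w^*)$, and $\kappa\bigl(\overline{C-C}^{w}\bigr)$ is a topological subspace of $\wscl{\kappa(C-C)}$; since the Fr\'echet-Urysohn property is hereditary, $\overline{C-C}^{w}$ inherits it.
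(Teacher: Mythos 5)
Your proof is correct and follows essentially the same route as the paper: represent $C-C$ as a uniformly bounded family of continuous functions on the dual ball of a separable space (a Polish space), verify via two successive applications of Rosenthal's theorem that every sequence in $C-C$ has a weakly Cauchy subsequence, apply Corollary~\ref{cor-BFT}, and transfer the Fr\'echet-Urysohn property back through the relevant homeomorphisms. The only notable difference is that you justify explicitly, via the adjoint $r^*$ of the restriction map $X^*\to Y^*$, the passage from the weak* closure in $Y^{**}$ back to the weak* closure in $X^{**}$ --- a step the paper subsumes under ``we can without loss of generality suppose that $X$ is separable.''
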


\begin{proof} As the closed linear span of $C$ is separable, we can without loss of generality suppose that $X$ is separable. Further we have:
$$\mbox{Each sequence in $C-C$ has a weakly Cauchy subsequence.}\eqno{(*)}$$
Indeed, let $\{z_n\}$ be a sequence in $C-C$. Then there are sequences $\{x_n\}$ and $\{y_n\}$ in $C$ such that $z_n=x_n-y_n$ for each $n\in\en$. As $C$ contains no $\ell_1$-sequence, by Rosenthal's theorem \cite{R} there is a subsequence $\{x_{n_k}\}$ of $\{x_n\}$ which is weakly Cauchy. Using Rosenthal's theorem once more, we get a subsequence $\{y_{n_{k_l}}\}$ of $y_{n_k}$ which is weakly Cauchy. Then $\{z_{n_{k_l}}\}=\{x_{n_{k_l}}-y_{n_{k_l}}\}$ is a weakly Cauchy subsequence of $\{z_n\}$. This completes the proof of \thetag{*}.

Further, denote by $K$ the dual unit ball $(B_{X^*},w^*)$ equipped with the weak* topology. Then $K$ is a metrizable compact space. Denote by $r$
the mapping $r:X^{**}\to \er^K$ defined by $r(F)=F|_K$ for $F\in X^{**}$. Then we have:
\begin{itemize}
	\item[(i)] $r$ is a homeomorphism of $(X^{**},w^*)$ onto $r(X^{**})$.
	\item[(ii)] $r\circ \kappa$ is a homeomorphism of $(X,w)$ onto $r(\kappa(X))$.
	\item[(iii)] The functions from $r(\kappa(X))$ are continuous on $K$. 
\end{itemize}
Set $M=r(\kappa(C-C))$. Then $M$ is a uniformly bounded sets of continuous functions on $K$.
Moreover, by \thetag{*} any sequence from $M$ has a pointwise convergent subsequence. 
By Corollary~\ref{cor-BFT} the closure of $M$  in $\er^K$ is a Fr\'echet-Urysohn compact subset of $B_1(K)$. But this closure is equal to $r\left(\wscl{\kappa(C-C)}\right)$. It follows that $\wscl{\kappa(C-C)}$ is Fr\'echet-Urysohn in the weak* topology. This completes the proof of the first statement.

Further, to show the `in particular case' it is enough to observe that the set $\wscl{\kappa(C-C)}$ contains $\kappa\left(\overline{C-C}^{w}\right)$, hence $\overline{C-C}^{w}$ is Fr\'echet-Urysohn in the weak topology.
\end{proof} 

As a corollary we get the following characterization of spaces not containing $\ell_1$:

\begin{thm}\label{ell1} Let $X$ be a Banach space. Then the following assertions are equivalent.
\begin{itemize}
	\item[(1)] $X$ contains no isomorphic copy of $\ell_1$.
	\item[(2)] Each bounded separable subset of $X$ is Fr\'echet-Urysohn in the weak topology.
	\item[(3)] For each separable subset $A\subset X$ there are relatively weakly closed subsets $A_n$, $n\in\en$, of $A$ such that $A=\bigcup_{n\in\en}A_n$ and each $A_n$ is Fr\'echet-Urysohn in the weak topology.  
\end{itemize}
\end{thm}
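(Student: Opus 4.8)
The plan is to establish the cycle of implications $(1)\Rightarrow(2)\Rightarrow(3)\Rightarrow(1)$, the last of which is the substantial one.

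For $(1)\Rightarrow(2)$, let $C$ be a bounded separable subset of $X$. Since $X$ contains no copy of $\ell_1$, the set $C$ contains no $\ell_1$-sequence (the closed linear span of such a sequence would be a copy of $\ell_1$ inside $X$). Hence Proposition~\ref{FU} applies and $\overline{C-C}^{w}$ is Fr\'echet--Urysohn in the weak topology. I would then fix $x_0\in C$ and observe that the translation $x\mapsto x-x_0$ is a homeomorphism of $(X,w)$ onto itself carrying $C$ onto $C-x_0\subset C-C\subset\overline{C-C}^{w}$. As the Fr\'echet--Urysohn property is hereditary, $C-x_0$, and therefore $C$, is Fr\'echet--Urysohn in the weak topology. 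For $(2)\Rightarrow(3)$, given a separable $A\subset X$ I would set $A_n=A\cap nB_X$, where $B_X$ is the closed unit ball; each $nB_X$ is norm-closed and convex, hence weakly closed, so $A_n$ is relatively weakly closed in $A$, and $A=\bigcup_n A_n$. Every $A_n$ is bounded and (as a subset of the separable metric space $A$) separable, so it is weakly Fr\'echet--Urysohn by $(2)$.

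The heart of the matter is $(3)\Rightarrow(1)$, which I would prove contrapositively: assuming an $\ell_1$-sequence $\{u_n\}$ in $X$, I produce a separable set admitting no decomposition as in $(3)$. Write $c,M>0$ for the lower and upper bounds $c\sum|a_i|\le\|\sum a_iu_i\|$ and $\sup_n\|u_n\|\le M$, set $Y=\ncl{\sp\{u_n:n\in\en\}}\cong\ell_1$, let $A=B_Y$ be its (separable) closed unit ball, and put $v_{k,l}=\tfrac12(u_k-u_l)$. The crucial analytic input is that $0\in\overline{\{v_{k,l}\}}^{w}$: for any finite $F\subset X^*$ and $\ep>0$ the points $(\langle f,u_k\rangle)_{f\in F}$, $k\in\en$, form a bounded, hence totally bounded, subset of $\er^{|F|}$, so some $k\ne l$ satisfy $|\langle f,u_k\rangle-\langle f,u_l\rangle|<2\ep$ for all $f\in F$, whence $|\langle f,v_{k,l}\rangle|<\ep$. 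At the same time $\|v_{k,l}\|\ge c$, so $0$ is far from $\{v_{k,l}\}$ in norm, and by Schur's theorem in $Y\cong\ell_1$ no sequence of the $v_{k,l}$ converges weakly to $0$.

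Now suppose, for contradiction, that $A=\bigcup_n A_n$ with each $A_n$ relatively weakly closed in $A$ and weakly Fr\'echet--Urysohn. Since $B_Y$ is weakly closed in $X$, each $A_n$ is in fact weakly closed, hence norm-closed, in $X$. As $B_Y$ is a complete metric space in the norm, the Baire category theorem gives an index $n_0$, a point $x_0$ with $\|x_0\|<1$, and $\rho>0$ with $B(x_0,\rho)\subset A_{n_0}$. Choosing $r>0$ with $rM<\rho$, we get $x_0+rv_{k,l}\in A_{n_0}$ for all $k,l$, and, since $v\mapsto x_0+rv$ is a weak homeomorphism, $x_0\in\overline{\{x_0+rv_{k,l}\}}^{w}\cap A_{n_0}$, while no sequence $\{x_0+rv_{k_j,l_j}\}$ converges weakly to $x_0$ (this would force $v_{k_j,l_j}\to0$ in norm). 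Thus $A_{n_0}$ is not weakly Fr\'echet--Urysohn, a contradiction, so $(3)$ fails. The main obstacle I anticipate is precisely this upgrade from \emph{one} non-Fr\'echet--Urysohn configuration to \emph{no} countable decomposition: a single bounded non-Fr\'echet--Urysohn set (e.g.\ $\{v_{k,l}\}\cup\{0\}$) can be split apart by separating functionals, so the argument must instead exploit that the pieces are forced to be norm-closed and that a scaled self-similar copy of the clustering configuration sits inside \emph{every} relative norm ball, which is exactly what the Baire category step secures.
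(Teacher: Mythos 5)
Your proof is correct, and all the key ingredients match the paper's (Proposition~\ref{FU} for (1)$\Rightarrow$(2), the decomposition $A=\bigcup_n (A\cap nB_X)$ for (2)$\Rightarrow$(3), and Baire category plus the Schur property for the hard direction), but you organize the hard direction differently. The paper makes (2) the hub: it proves (3)$\Rightarrow$(2) by reducing to unit balls of separable closed subspaces, applying Baire category to the norm-closed pieces to find a ball inside some piece, and then invoking the affine weak homeomorphism between any two balls; separately, it proves (2)$\Rightarrow$(1) by quoting the standard fact that the unit ball of $\ell_1$ is not Fr\'echet--Urysohn, witnessed by the sphere ($0$ lies in its weak closure, while the sphere is weakly sequentially closed by Schur). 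You instead fuse these two steps into a single contrapositive (3)$\Rightarrow$(1): you build an explicit countable witness $\{(u_k-u_l)/2\}$, prove $0$ lies in its weak closure by a pigeonhole argument in $\mathbb{R}^{|F|}$ (rather than citing the weak closure of the sphere), and transport a scaled copy of this configuration into the Baire-category ball. Your route is somewhat longer but more self-contained: every weak-topology fact is verified by hand, and the self-similarity point you flag at the end --- that the witness configuration must reappear inside every relative ball, so no countable decomposition into norm-closed pieces can dodge it --- is exactly the mechanism that the paper packages more abstractly as ``balls are affinely homeomorphic, and Fr\'echet--Urysohn is hereditary.'' The paper's factorization buys modularity and brevity; yours buys a concrete, quotation-free argument. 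One small detail you gloss over (harmlessly): the relatively open set produced by Baire category must be intersected with the open unit ball of $Y$ to get a point $x_0$ with $\|x_0\|<1$ around which a full ball of $Y$ lies in $A_{n_0}$; this follows since the open ball is dense in $B_Y$.
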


Note that the assertion (3) is a topological property of the space $(X,w)$ (as norm separability coincides with weak separability).

\begin{proof}
The implication (1)$\Rightarrow$(2) follows from Proposition~\ref{FU}. 

The implication (2)$\Rightarrow$(1) follows from the fact that the unit ball of $\ell_1$ is not Fr\'echet-Urysohn (as $0$ is in the weak closure of the sphere and the sphere is weakly sequentially closed by the Schur theorem).

The implication (2)$\Rightarrow$(3) is trivial if we use the fact that a closed ball is weakly closed.

Let us prove (3)$\Rightarrow$(2). To show (2) it is enough to prove that the unit ball of any closed separable subspace of $X$ is Fr\'echet-Urysohn in the weak topology. Let $Y$ be such a subspace. Let $Y_n$, $n\in\en$, be the cover of $Y$ provided by (3). As each $Y_n$ is weakly closed, it is also norm-closed. By Baire category theorem some $Y_n$ has a nonempty interior in $Y$, so it contains a ball.
We get that some ball in $Y$ is Fr\'echet-Urysohn, so the unit ball has this property as well.
\end{proof}

It is worth to compare the previous theorem with a similar characterization of Asplund spaces:

\begin{thm} Let $X$ be a Banach space. Then the following assertions are equivalent.
\begin{itemize}
	\item[(1)] $X$ is Asplund.
	\item[(2)] Each bounded separable subset of $X$ is metrizable in the weak topology.
	\item[(3)] For each separable subset $A\subset X$ there are relatively weakly closed subsets $A_n$, $n\in\en$, of $A$ such that $A=\bigcup_{n\in\en}A_n$ and each $A_n$ is metrizable in the weak topology. \end{itemize}
\end{thm}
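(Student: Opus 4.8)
The plan is to establish the chain of equivalences by mimicking the proof of Theorem~\ref{ell1}, replacing ``Fr\'echet-Urysohn'' by ``metrizable'' throughout and feeding in two classical facts in place of Proposition~\ref{FU} and the Schur theorem. The two facts I would invoke are: (a) $X$ is Asplund if and only if every separable closed subspace $Y\subset X$ has a separable dual $Y^*$; and (b) for a separable Banach space $Y$, the unit ball $(B_Y,w)$ is metrizable in the weak topology if and only if $Y^*$ is norm-separable. I would also use the elementary observations that the weak topology of $X$ restricted to a subspace $Y$ coincides with the weak topology of $Y$ (Hahn--Banach), that the weak topology is invariant under translations and dilations, and that a weakly closed set is norm-closed.

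For (1)$\Rightarrow$(2), given a bounded separable set $B$ I would put $Y=\overline{\sp}(B)$, which is a separable subspace, so $Y^*$ is separable by (a); by (b) the ball $(B_Y,w)$ is metrizable, hence so is every bounded subset of $Y$, and in particular $B$, since $B\subset rB_Y$ for some $r>0$ and the weak topology of $Y$ agrees with the one induced from $X$. For (2)$\Rightarrow$(1) I would run the same reasoning backwards: for a separable subspace $Y$, the ball $B_Y$ is a bounded separable subset of $X$, so by (2) it is weakly metrizable, whence $Y^*$ is separable by the nontrivial direction of (b); as $Y$ was arbitrary, (a) yields that $X$ is Asplund. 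The implication (2)$\Rightarrow$(3) is immediate by writing $A=\bigcup_{n\in\en}(A\cap nB_X)$, each piece being bounded, separable and relatively weakly closed, as closed balls are weakly closed, and hence weakly metrizable by (2).

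For (3)$\Rightarrow$(2) I would argue exactly as in the proof of Theorem~\ref{ell1}: it suffices to show that $(B_Y,w)$ is metrizable for every closed separable subspace $Y$. Applying (3) to $A=Y$ yields relatively weakly closed sets $Y_n$, $n\in\en$, covering $Y$, each weakly metrizable; being weakly closed they are norm-closed, so by the Baire category theorem in the complete space $Y$ some $Y_{n_0}$ has nonempty norm-interior and therefore contains a closed ball. That ball is weakly metrizable as a subset of $Y_{n_0}$, and translating and dilating it (both weak homeomorphisms) shows that $B_Y$ is weakly metrizable. This proves (2), so any bounded separable set, sitting inside the ball of its separable closed span, is weakly metrizable as well.

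The only genuinely nontrivial input is the classical metrization criterion (b), whose hard direction — that weak metrizability of the ball forces $Y^*$ to be separable — is precisely what makes (2)$\Rightarrow$(1) work; everything else is the same Baire-category bookkeeping already carried out for Theorem~\ref{ell1}. I expect the main point to watch is transporting metrizability correctly between $B$, $B_Y$ and translated or dilated balls, using that the weak topology on a subspace is intrinsic and translation/dilation invariant.
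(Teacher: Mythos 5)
Your proposal is correct and follows essentially the same route as the paper: the paper also reduces (1)$\Leftrightarrow$(2) to the facts that $X$ is Asplund iff each separable subspace has separable dual and that $(B_Y,w)$ is metrizable iff $Y^*$ is separable, and proves (2)$\Leftrightarrow$(3) by the same Baire-category argument used for Theorem~\ref{ell1}. You have merely filled in the routine details (closed span, translation/dilation invariance, heredity of metrizability) that the paper leaves implicit.
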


We recall that $X$ is Asplund if and only if $Y^*$ is separable for each separable subspace $Y\subset X$. The equivalence of (1) and (2) follows from the well-known fact that the unit ball of $Y$ is metrizable in the weak topology if and only if $Y^*$ is separable. The equivalence of (2) and (3) can be proved similarly as corresponding equivalence in the previous theorem.

\begin{remark} There is no analogue of Theorem~\ref{ell1} for convex sets.
Indeed, let $X=\ell_1$ and $C$ be the closed convex hull of the standard basis. Then $C$ contains an $\ell_1$-sequence but is Fr\'echet Urysohn in the weak topology. In fact, it is even metrizable as it is easy to see that on the positive cone of $\ell_1$ the weak and norm topologies coincide.
\end{remark}

\section{Proof of Theorem~\ref{convex}}

It remains to prove the implication (2)$\Rightarrow$(1). 
Let $X$ be a Banach space, $C\subset X$ a nonempty closed convex bounded set containing no $\ell_1$-sequence and $f:C\to C$ be a continuous mapping. Let $D$ be the set provided by Lemma~\ref{L2}.
Then $D$ is separable and contains no $\ell_1$-sequence. By Lemma~\ref{L1} the point $0$ is in the weak closure of $\{x-f(x):x\in D\}$. By Proposition~\ref{FU} there is a sequence from this set weakly converging to $0$. This completes the proof.\qed

\bigskip

\begin{remark} We stress the difference between approximation in the norm and in the weak topology. Suppose that $X$ is a Banach space, $C\subset X$ a nonempty closed convex bounded set and $f:C\to C$ a continuous mapping.

For approximation in the norm, we have the equivalence of the following three conditions:
\begin{itemize}
	\item There is a sequence $\{x_n\}$ in $C$ such that $x_n-f(x_n)\to0$.
	\item The point $0$ is in the norm-closure of the set $\{x-f(x):x\in C\}$.
	\item $\inf\{\|x-f(x)\|: x\in C\}=0$.
\end{itemize}

These three statements are trivially equivalent (by properties of metric spaces) and are rather strong.
For the weak topology the situation the situation is different. First, there is no analogue of the third condition. Secondly, the analogue of the second one is satisfied allways by Lemma~\ref{L1}.
But the analogue of the first one is not satisfied allways, as the weak topology is not in general
described by sequences. 
\end{remark}

\section*{Acknowledgement}
The author is grateful to Barry Turett for informing him about the problem. Thanks are also due
to Mari\'an Fabian for providing the contact to Barry Turett.

 \end{document}